\newcommand{\B}{\ensuremath{\mathrm{B}}}
\newcommand{\C}{\ensuremath{\mathbb{C}}}
\newcommand{\R}{\ensuremath{\mathbb{R}}}
\let\L\relax %\L ne fait plus rien !
\newcommand{\L}{\mathrm{L}}
\newcommand{\w}{\mathrm{w}} %weak*
\renewcommand{\d}{\mathop{}\mathopen{}\mathrm{d}} %op�rateur diff�rentiel
\newcommand{\rad}{\mathrm{rad}}
\newcommand{\loc}{\mathrm{loc}} %locally
\newcommand{\Id}{\mathrm{Id}} %Identity
\newcommand{\SO}{\mathrm{SO}} 
\newcommand{\norm}[1]{\left\Vert#1\right\Vert}
\newcommand{\bnorm}[1]{ \big\| #1  \big\|}
\newcommand{\xra}{\xrightarrow} %limit
\newcommand{\co}{\colon}
\newcommand{\ot}{\otimes}
\newtheorem{thm}{Theorem}%[section]
\newtheorem{lemma}[thm]{Lemma}
\newtheorem{remark}[thm]{Remark}
\begin{document}
\title[Complementation of radial multipliers]
 {Complementation of the subspace of radial multipliers in the space of Fourier multipliers on $\R^n$}
\author[C. Arhancet]{C\'edric Arhancet}

\address{%
13 rue Didier Daurat\\
81000 Albi\\
France\\
URL: \href{http://sites.google.com/site/cedricarhancet}{http://sites.google.com/site/cedricarhancet} }

\email{cedric.arhancet@protonmail.com}

\author[C. Kriegler]{Christoph Kriegler}
\address{Laboratoire de Math\'ematiques Blaise Pascal (UMR 6620)\\
Universit\'e Clermont Auvergne\\
63 000 Clermont-Ferrand\\
France\\
URL: \href{http://math.univ-bpclermont.fr/\~kriegler/indexenglish.html}{http://math.univ-bpclermont.fr/\~{}kriegler/indexenglish.html} }

\email{christoph.kriegler@uca.fr}

\thanks{The second author was partly supported by the grant ANR-17-CE40-0021 of the French
National Research Agency ANR (project Front).}
%%%%%%%%%%%%%%%%%%%%%%%%%%%%%%%%%%%%%%%%%%%%%%%%%%%%%%%%%%%%%%%%%%%%%%%%%%%%%%%%%%%%%%%%%%

%----------classification, keywords, date
\subjclass{Primary 42B15, Secondary 43A15, 43A22.}

\keywords{$\L^p$-spaces, Fourier multipliers, complemented subspaces, radial multipliers.}

\date{June 26, 2018}

\begin{abstract}
In this short note, we prove that the subspace of radial multipliers is contractively complemented in the space of Fourier multipliers on the Bochner space $\L^p(\R^n,X)$ where $X$ is a Banach space and where $1 \leq p <\infty$. Moreover, if $X = \C$, then this complementation preserves the positivity of multipliers.
\end{abstract}

%%% ----------------------------------------------------------------------
\maketitle
%%% ----------------------------------------------------------------------
%\tableofcontents

%\section{Introduction}
%\label{sec:Introduction}
%%%%%%%%%%%%%%%%%%%%%%%%%%%%%%%%%%%%%%%%%%%%%%%%%%%%%%%%%%%%%%%%%%%%%%%%%%%%%%%%%%%%%%%%%%%%%%%%%%%%%%%%%%%%%%%%%%%%%%%%%%%%%%%%%%%%%%%%%%%%%%%%%%%%%%%
%

If $|\cdot|$ denotes the euclidean norm on $\R^n$, recall that a complex function $\phi \co \R^n \to \C$ is radial if we can write $\phi(x) = \dot{\phi}(|x|)$ for some function $\dot{\phi} \co \R_+ \to \C$. If $X$ is a Banach space and if $1 \leq p<\infty$ then Theorem \ref{thm-complementation-radial-multipliers-Rn} below says that the subspace $\mathfrak{M}^{p}_\rad(\R^n,X)$ of radial Fourier multipliers on the Bochner space $\L^p(\R^n,X)$ is contractively complemented in the space $\mathfrak{M}^p(\R^n,X)$ of (scalar-valued) Fourier multipliers on $\L^p(\R^n,X)$. 

We refer to \cite[Definition 5.3.3]{HvNVW} for the definition of $\mathfrak{M}^{p}(\R^n,X)$\footnote{In the book \cite{HvNVW}, the notation $\mathfrak{M}L^p(\R^n;X)$ is used.}. We will use the notation $\mathfrak{M}^p_{\rad}(\R^n,X) 
= \left\{\phi \in \mathfrak{M}^{p}(\R^n,X)\ :\ \phi \text{ is radial}\right\}$ equipped with the norm induced by the one of the Banach space $\B(\L^p(\R^n,X))$ of bounded operators on the Bochner space $\L^p(\R^n,X)$. We say that a bounded  linear operator $T \co \L^p(\R^n) \to \L^p(\R^n)$ is positive if $T(f) \geq 0$ for any $f \geq 0$.

Let $E$ be a Hausdorff locally convex space and $E'$ its topological dual. If we denote by $E''$ the strong bidual of $E$, that is the topological dual of $E'$ where $E'$ is equipped with the strong topology, we have an injective map $j_E \co E \to E''$. This map allows us to identify $E$ as a subspace of $E''$. We say that $E$ is semireflexive if this map is surjective, see \cite[page 523]{Edw}. In this case, we can identify $E$ and $E''$ as \textit{vector spaces}. By \cite[Remarks 8.16.5]{Edw}, if $E$ is barrelled (this condition is fulfilled whenever $E$ is a Banach space) then $E'$ equipped with the weak* topology is semireflexive.

Let $\Omega$ be a locally compact space which is countable at infinity\footnote{\label{footnot2}In this case, by \cite[V.1]{Bou1}, we can remove the words ``essentially'' and ``essential'' of the  statements of \cite{Edw} used in our paper.} equipped with a Radon measure $\mu$ and let $E$ be a Hausdorff locally convex space. We say that a function $f \co \Omega \to E$ is scalarly $\mu$-integrable if for any element $\varphi$ of the topological dual $E'$ the scalar-valued function $\varphi \circ f \co \Omega \to \C$ is integrable, see \cite[page 558]{Edw}. In this case, we denote by $\int_\Omega f \d \mu \co E' \to \C$ the not necessarily continuous linear form on $E'$  defined by
\begin{equation*}
\label{Gelfand-integral}
\bigg\langle \int_\Omega f \d \mu,\varphi\bigg\rangle_{E'^*,E'}
=\int_\Omega \varphi \circ f \d \mu,	
\end{equation*}
where $E'^*$ is the algebraic dual of the topological dual $E'$. We say that a scalarly $\mu$-integrable function $f \co \Omega \to E$ is Gelfand integrable if the element $\int_\Omega f \d \mu$ belongs to $E''$ \cite[page 565]{Edw}. In this case, $\int_\Omega f \d \mu$ is called the Gelfand integral of $f$. If in addition, $\int_\Omega f \d \mu$ belongs to $E$ we have for any $\varphi \in E'$
\begin{equation}
\label{Gelfand-bracket2}
\bigg\langle \int_\Omega f \d \mu,\varphi\bigg\rangle_{E,E'}
=\int_\Omega \varphi \circ f \d \mu.
\end{equation}

By \cite[Corollary, VI.6]{Bou1} (see also \cite[Corollary 8.14.10]{Edw}), if $E$ is semireflexive and if $f \co \Omega \to E$ is a scalarly $\mu$-integrable function such that, for every compact subset $K$ of $\Omega$, $f(K)$ is bounded then $f$ is Gelfand integrable with $\int_\Omega f \d \mu \in E$.

We will use the following lemma which is a straightforward consequence of \cite[Proposition 8.14.5]{Edw}.

\begin{lemma}
\label{Lemma-1}
Let $\Omega$ be a locally compact space which is countable at infinity equipped with a Radon measure $\mu$. Let $T \co E \to F$ be a continuous linear map between Hausdorff locally convex spaces. If the function $f \co \Omega \to E$ is Gelfand integrable with $\int_{\Omega} f \d \mu\in E$ then the function $T \circ f \co \Omega \to F$ is Gelfand integrable with $\int_{\Omega} T\circ f \d \mu \in F$ and we have
\begin{equation}
\label{linear-maps-and-integrals}
T\bigg(\int_{\Omega} f \d \mu \bigg)
=\int_\Omega T \circ f \d \mu.	
\end{equation}
\end{lemma}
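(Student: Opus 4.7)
The plan is to unfold the definition of Gelfand integrability and use only the continuity of $T$ together with the identity \eqref{Gelfand-bracket2}. First I would verify scalar integrability of $T\circ f$. Given any $\psi\in F'$, continuity of $T\co E\to F$ yields $\psi\circ T\in E'$, so $\psi\circ(T\circ f)=(\psi\circ T)\circ f\co\Omega\to\C$ is $\mu$-integrable by the scalar integrability of $f$. This already produces a linear form $\int_\Omega T\circ f\d\mu\co F'\to\C$ living \emph{a priori} only in $F'^{*}$.

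Next I would identify this linear form. Applying the bracket definition and then \eqref{Gelfand-bracket2} (which is available since $\int_\Omega f\d\mu\in E$ by assumption), I get, for every $\psi\in F'$,
\begin{equation*}
\bigg\langle \int_\Omega T\circ f \d\mu,\psi\bigg\rangle_{F'^{*},F'}
=\int_\Omega (\psi\circ T)\circ f \d\mu
=\bigg\langle \int_\Omega f \d\mu,\psi\circ T\bigg\rangle_{E,E'}
=\bigg\langle T\bigg(\int_\Omega f \d\mu\bigg),\psi\bigg\rangle_{F,F'}.
\end{equation*}
The last equality is simply the evaluation of $\psi\circ T$ at the element $\int_\Omega f\d\mu\in E$.

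Since the right-hand side is the evaluation of the element $T\bigl(\int_\Omega f\d\mu\bigr)\in F$ (viewed as an element of $F''$ via $j_F$) against $\psi\in F'$, the linear form $\int_\Omega T\circ f \d\mu$ on $F'$ coincides with $j_F\bigl(T(\int_\Omega f\d\mu)\bigr)\in F''$. Hence $T\circ f$ is Gelfand integrable, its Gelfand integral lies in $F$, and the formula \eqref{linear-maps-and-integrals} holds.

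The argument is essentially a diagram chase; there is no real obstacle. The only thing that must be handled carefully is the distinction between a scalarly integrable function (whose ``integral'' is merely an element of $F'^{*}$) and a Gelfand integrable one (integral in $F''$), and then between the latter and the case where the integral actually lies in $F$. The continuity of $T$ ensures $\psi\circ T\in E'$ and thereby lets the chain of identifications go through without ever leaving the comfortable setting in which \eqref{Gelfand-bracket2} is valid. The reference \cite[Proposition 8.14.5]{Edw} is exactly this bookkeeping.
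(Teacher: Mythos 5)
Your proof is correct and follows essentially the same route as the paper: the paper simply outsources the two facts you verify by hand (scalar integrability of $T\circ f$ via $\psi\circ T\in E'$, and the identity $T'^*\bigl(\int_\Omega f\d\mu\bigr)=\int_\Omega T\circ f\d\mu$ for the canonical extension $T'^*\co E'^*\to F'^*$) to \cite[Proposition 8.14.5]{Edw} and then restricts to the case where the integral lies in $E$, exactly as you do. Your self-contained computation is a faithful unfolding of that reference, and as a minor bonus it makes clear that the countability-at-infinity hypothesis is only needed to match Edwards' conventions, not for the argument itself.
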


\begin{proof}
Since $\Omega$ is countable at infinity, by \cite[Proposition 8.14.5]{Edw}, the function $T \circ f \co \Omega \to F$ is scalarly integrable. Moreover, the same reference says that if $T'^* \co E'^* \to F'^*$ is the canonical \textit{extension} of $T\co E \to F$, we have
$$
T'^*\bigg(\int_\Omega f \d \mu\bigg)
=\int_\Omega T \circ f \d \mu.
$$
where we consider the integrals $\int_\Omega f \d \mu$ and $\int_\Omega T \circ f \d \mu$ as elements of $E'^*$ and $F'^*$. Since $\int_{\Omega} f \d \mu$ belongs to $E$, we deduce that $T'^*(\int_\Omega f \d \mu)$ belongs to $F$ and is equal to $T(\int_\Omega f \d \mu)$. We infer that $\int_\Omega T \circ f \d \mu$ also belongs to $F$ and that the equality \eqref{linear-maps-and-integrals} is true.
%For any $\varphi \in F'$, the function $\Omega \to \C$, $\omega \mapsto \varphi \circ T \circ f(\omega)=\langle T \circ f(\omega), \varphi\rangle_{F,F'}=\langle f(\omega), T'(\varphi)\rangle_{E,E'}$ is integrable since $f$ is scalarly integrable and since $T'(y')$ belongs to $E'$. Hence the function $T \circ f \co \Omega \to F$ is scalarly integrable. 
%Moreover, for any $\varphi \in F'$, we have
%\begin{align*}
%\MoveEqLeft
 %\bigg\langle T\bigg(\int_{\Omega} f \d \mu\bigg), \varphi\bigg\rangle_{F,F'} 
%=\bigg\langle \int_{\Omega} f \d \mu, T'(\varphi) \bigg\rangle_{E,E'} 
%=\bigg\langle \int_{\Omega} f \d \mu, T'(\varphi) \bigg\rangle_{E'',E'}\\       
%&\overset{\eqref{Gelfand-integral}}=\int_{\Omega} \big\langle f(\omega), T'(\varphi) \big\rangle_{E,E'}\d \mu
%=\int_{\Omega} \big\langle T(f(\omega)), \varphi \big\rangle_{F,F'}\d \mu\\
%&=\bigg\langle \int_{\Omega} T \circ f \d \mu, \varphi \bigg\rangle_{F,F'}.
%\end{align*}  
%So, we have proved the equality \eqref{linear-maps-and-integrals}.
\end{proof}

We need the following variant of \cite[Proposition 2.5.2]{HvNVW}. 

%Recall that a measurable function $f$ belongs to $\L^1_\loc(\R^n,X)$ if and only if $h f$ belongs to $\L^1(\R^n,X)$ for any $h \in \mathscr{D}(\R^n)$.

\begin{lemma}
\label{Lemma-2}
Let $X$ be a Banach space. If a function $f$ of $\L^1_\loc(\R^n,X)$ satisfies 
$$
\int_{\R^n} \big\langle f(s), g(s) \big\rangle_{X,X'} \d s
=0
$$
for any $g \in \mathscr{D}(\R^n,X')$ then $f=0$ almost everywhere.
\end{lemma}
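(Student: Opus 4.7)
The plan is to reduce to the scalar test-function version of this statement, namely \cite[Proposition 2.5.2]{HvNVW}, by decoupling the $X'$-valued test function into a scalar part and a fixed element of $X'$. For any $\varphi \in \mathscr{D}(\R^n)$ and any $x' \in X'$, the function $g \co s \mapsto \varphi(s)x'$ clearly lies in $\mathscr{D}(\R^n,X')$. Feeding this choice into the hypothesis yields
$$
\int_{\R^n} \langle f(s), x' \rangle_{X,X'}\, \varphi(s) \d s = 0.
$$

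Next I would pull $x'$ out of the integral. Because $f \in \L^1_\loc(\R^n,X)$ and $\varphi$ has compact support, the product $\varphi f$ is Bochner integrable on $\R^n$, and bounded linear functionals commute with the Bochner integral. Hence the previous display rewrites as
$$
\bigg\langle \int_{\R^n} \varphi(s) f(s) \d s,\, x' \bigg\rangle_{X,X'} = 0.
$$
Since this vanishes for every $x' \in X'$, the Hahn--Banach separation theorem forces $\int_{\R^n} \varphi(s) f(s) \d s = 0$ in $X$.

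Having established that $\int_{\R^n} \varphi f \d s = 0$ for every scalar test function $\varphi \in \mathscr{D}(\R^n)$, I would then invoke \cite[Proposition 2.5.2]{HvNVW} to conclude that $f = 0$ almost everywhere. The argument is essentially bookkeeping and I do not anticipate a genuine obstacle; the only small technical point to verify is that $s \mapsto \varphi(s)x'$ is a legitimate element of $\mathscr{D}(\R^n,X')$ (smoothness and compact support in the $X'$-valued sense), which is immediate from the definitions.
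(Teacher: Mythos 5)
Your proof is correct, and its first half --- testing the hypothesis against the elementary tensors $s \mapsto \varphi(s)x'$ --- is exactly what the paper does. Where you diverge is in the finish. The paper fixes $x' \in X'$, applies \cite[Proposition 2.5.2]{HvNVW} to the \emph{scalar} function $\langle f(\cdot), x'\rangle \in \L^1_\loc(\R^n)$ to conclude $\langle f(\cdot), x'\rangle = 0$ a.e.\ for each $x'$, and then invokes \cite[Corollary 1.1.25]{HvNVW} to pass from ``every scalar pairing vanishes a.e.'' to ``$f=0$ a.e.'' --- a step that is not free, since the exceptional null set depends on $x'$ and $X'$ is uncountable (it is handled via the essential separable-valuedness of $f$). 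You instead commute $x'$ with the Bochner integral of $\varphi f$ (legitimate, since $\varphi f$ is Bochner integrable and bounded functionals commute with the Bochner integral, cf.\ \cite[(1.2) page 15]{HvNVW}), use that $X'$ separates the points of $X$ to obtain the $X$-valued identity $\int_{\R^n} \varphi(s) f(s) \d s = 0$ for every $\varphi \in \mathscr{D}(\R^n)$, and only then apply \cite[Proposition 2.5.2]{HvNVW}, necessarily in its vector-valued form. Since that proposition is stated in \cite{HvNVW} for functions in $\L^1_\loc$ with values in a Banach space, the citation carries the weight you put on it, and your route has the small advantage of bypassing Corollary 1.1.25 and the null-set bookkeeping entirely. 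Both arguments are short and correct; yours pushes the vector-valued content into the cited proposition, while the paper's pushes it into Corollary 1.1.25.
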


\begin{proof}
If $\varphi \in X'$ and if $h \in \mathscr{D}(\R^n)$, using the function $g=h \ot \varphi $ of $\mathscr{D}(\R^n,X')$, we obtain 
\begin{align*}
\int_{\R^n} h(s) \big\langle f(s), \varphi \big\rangle_{X,X'} \d s
& =\int_{\R^n} \big\langle f(s), (h \ot \varphi)(s) \big\rangle_{X,X'} \d s \\
& =\int_{\R^n} \big\langle f(s), g(s) \big\rangle_{X,X'} \d s =0.
\end{align*}
By \cite[Proposition 2.5.2]{HvNVW}, we deduce that the function $\langle f(\cdot), \varphi\rangle_{X,X'}$ is null almost everywhere. By \cite[Corollary 1.1.25]{HvNVW}, we conclude that $f=0$ almost everywhere.
\end{proof}

\begin{thm}
\label{thm-complementation-radial-multipliers-Rn}
Let $X$ be a Banach space. Suppose $1 \leq p<\infty$. Then there exists  a contractive projection $P_{p,X} \co \mathfrak{M}^{p}(\R^n,X) \to \mathfrak{M}^{p}(\R^n,X)$ onto the subspace $\mathfrak{M}^{p}_{\rad}(\R^n,X)$. Moreover, the map $M_{P_{p,\C}(\phi)} \co \L^p(\R^n) \to \L^p(\R^n)$ associated to the radial multiplier $P_{p,\C}(\phi)$ is positive if the map $M_\phi \co \L^p(\R^n) \to \L^p(\R^n)$ is positive.
%Consequently, by composing with the projections from \cite[Theorem 1.1]{ArV} (resp. Proposition \ref{prop-ma-complementation-Fourier-multipliers-abelian}) one obtains contractive projections $P' \co \B(L^p(\R^n)) \to M^{p}_{\rad}(\R^n)$ (resp. $P_{\cb}' \co \CB(L^p(\R^n)) \to M^{p,\cb}_{\rad}(\R^n)$) preserving (complete) positivity.
\end{thm}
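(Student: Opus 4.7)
The plan is to define $P_{p,X}$ by averaging over the natural action of the rotation group $\SO(n)$. For each $R \in \SO(n)$, let $U_R$ be the isometric operator on $\L^p(\R^n,X)$ given by $(U_R f)(x) = f(R^{-1}x)$. Since the Fourier transform is equivariant under rotations, a direct computation yields $M_{\phi\circ R^{-1}} = U_R M_\phi U_R^{-1}$ for every $\phi \in \mathfrak{M}^p(\R^n,X)$; in particular $\phi \circ R^{-1} \in \mathfrak{M}^p(\R^n,X)$ has the same norm as $\phi$. I would then set
\[
P_{p,X}(\phi)
=\int_{\SO(n)} \phi \circ R^{-1} \, dR,
\]
where $dR$ is the normalized Haar measure on $\SO(n)$ and the integral is a Gelfand integral, taken in $\mathfrak{M}^p(\R^n,X)$ endowed with a weak$^*$ topology whose closed unit ball is compact (for instance the one inherited from the embedding $\phi \mapsto M_\phi$ into $\B(\L^p(\R^n,X))$ with the weak operator topology, which is semireflexive by the remark quoted from \cite{Edw}). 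Since $R \mapsto \phi \circ R^{-1}$ has range of norm at most $\|\phi\|_{\mathfrak{M}^p}$, the Bourbaki corollary cited in the excerpt guarantees $P_{p,X}(\phi) \in \mathfrak{M}^p(\R^n,X)$ with $\|P_{p,X}(\phi)\|_{\mathfrak{M}^p} \leq \|\phi\|_{\mathfrak{M}^p}$.

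With this definition the projection properties come for free. Radiality of $P_{p,X}(\phi)$, i.e.\ $P_{p,X}(\phi) \circ S = P_{p,X}(\phi)$ for every $S \in \SO(n)$, is a one-line change of variable using the invariance of the Haar measure on $\SO(n)$. When $\phi$ itself is radial, $\phi \circ R^{-1} = \phi$ for every $R$, so $P_{p,X}(\phi) = \phi$; hence $P_{p,X}$ is a projection onto $\mathfrak{M}^p_{\rad}(\R^n,X)$, contractive by the preceding norm estimate.

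The main work, and the step I expect to be the principal obstacle, is to show that the averaged symbol $P_{p,X}(\phi)$ really is the symbol of the averaged operator. Concretely, I would verify that for every Schwartz function $f$ with $\hat f \in \mathscr{D}(\R^n)\ot X$ the identity
\[
M_{P_{p,X}(\phi)} f
= \int_{\SO(n)} U_R M_\phi U_R^{-1} f \, dR
\]
holds in $\L^p(\R^n,X)$ (the right-hand side existing as a Bochner integral since $R\mapsto U_R M_\phi U_R^{-1} f$ is continuous on the compact group $\SO(n)$). The strategy is to apply Lemma \ref{Lemma-1} to the continuous linear evaluation maps given by pairing with test functions $g \in \mathscr{D}(\R^n,X')$ so as to commute the Gelfand integral past these pairings, and then to invoke Lemma \ref{Lemma-2} to identify two $\L^1_{\loc}$-functions that coincide against every such $g$.

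Once this identification is established, the positivity claim for $X = \C$ is essentially automatic. Each $U_R$ maps non-negative functions to non-negative functions; hence if $M_\phi$ is positive and $f \geq 0$, then $U_R M_\phi U_R^{-1} f \geq 0$ almost everywhere for every $R$, and the Bochner integral of such a non-negative family against the Haar measure is itself non-negative. Thus $M_{P_{p,\C}(\phi)} f \geq 0$, concluding the proof.
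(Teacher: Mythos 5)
Your plan is essentially the paper's proof: average the symbol over the rotation group, identify the Gelfand integral of the averaged symbol with the Bochner average of the conjugated operators by testing against $\mathscr{D}(\R^n,X')$ via Lemmas \ref{Lemma-1} and \ref{Lemma-2}, deduce contractivity, rotation-invariance (hence radiality) and idempotence from invariance of the Haar measure, and obtain positivity from the positivity of Bochner integrals of nonnegative families. The one step I would not accept as written is your choice of ambient space for the Gelfand integral $\int_{\SO(\R^n)}\phi\circ R^{-1}\,\d\mu(R)$: you propose $\mathfrak{M}^p(\R^n,X)$ with the weak operator topology inherited from $\B(\L^p(\R^n,X))$ and assert that this is semireflexive with weak*-compact unit ball. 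That is unjustified in general: $\B(Y)$ carries a weak* topology agreeing with the weak operator topology essentially only when $Y$ is reflexive, and $\L^p(\R^n,X)$ fails to be reflexive for $p=1$ or for non-reflexive $X$ (already for $Y=\L^1(\R^n)$ the unit ball of $\B(Y)$ is not compact in the weak operator topology, as the net of translations shows). The paper sidesteps this by taking the Gelfand integral of the symbol in $\L^\infty(\R^n)_{\w^*}$, i.e.\ with the topology $\sigma(\L^\infty,\L^1)$, which is genuinely semireflexive by the quoted remark from Edwards since the predual $\L^1$ is barrelled; only afterwards does it prove that the resulting $\L^\infty$ function is an $\L^p$-multiplier, by matching $M_{P_{p,X}(\phi)}f$ with the Bochner integral of $R\mapsto U_R M_\phi U_R^{-1}f$ on the dense class of Schwartz functions $f$ with compactly supported $\hat f$ --- exactly the identification you describe as the main work. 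If you replace your ambient space by $\L^\infty(\R^n)_{\w^*}$, the rest of your outline goes through as stated.
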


\begin{proof}
Let $\SO(\R^n)$ be the compact group of orthogonal mappings $R \co \R^n \to \R^n$ with determinant $1$ equipped with its normalized left Haar measure $\mu$. For $R \in \SO(\R^n)$, consider the induced map $S_R \co \L^p(\R^n) \to \L^p(\R^n)$  defined by $S_{R}(f) = f(R\, \cdot)$. Clearly, $S_R$ and its inverse $S_{R^{-1}}$ are isometric and positive. By \cite[Theorem 2.1.3]{HvNVW}, we obtain that $S_R \ot \Id_X \co \L^p(\R^n,X) \to \L^p(\R^n,X)$ is a well-defined isometric map. For any $f \in \L^p(\R^n)$, by \cite[Chap. VIII, \S2, Section 5]{Bou2}, the map $\SO(\R^n) \to \L^p(\R^n)$, $R \mapsto S_{R}(f)$ is continuous. More generally, it is easy to see that for any $f \in \L^p(\R^n,X)$ the map $\SO(\R^n) \to \L^p(\R^n,X)$, $R \mapsto (S_R \ot \Id_X)(f)$ is also continuous. Since the composition of operators is strongly continuous on bounded sets by \cite[Proposition C.19]{EFHN}, for any $\phi \in \mathfrak{M}^p(\R^n,X)$, we deduce that the map $\SO(\R^n) \to \L^p(\R^n,X)$, $R \mapsto ((S_R^{-1} M_\phi S_R)\ot \Id_X)f$ is also continuous, hence Bochner integrable on the compact $\SO(\R^n)$. Now, for any $\phi \in \mathfrak{M}^p(\R^n,X)$, put
\begin{equation}
\label{Equa33}
Q_{p,X}(\phi)f 
=\int_{\SO(\R^n)} \big((S_R^{-1} M_\phi  S_R )\ot \Id_X\big)f \d\mu(R).	
\end{equation}
For any $f \in \L^p(\R^n,X)$ and any symbol $\phi \in \mathfrak{M}^{p}(\R^n,X)$, we have
\begin{align*}
\MoveEqLeft
  \bnorm{Q_{p,X}(\phi)f}_{\L^p(\R^n,X)} 
		=\norm{\int_{\SO(\R^n)}  \big((S_R^{-1} M_\phi  S_R )\ot \Id_X\big)f\d\mu(R)}_{\L^p(\R^n,X)}\\
		&\leq \int_{\SO(\R^n)} \norm{\big((S_R^{-1} M_\phi  S_R )\ot \Id_X\big)f}_{\L^p(\R^n,X)} \d\mu(R)\\
		&\leq  \norm{M_\phi \ot \Id_X}_{\L^p(\R^n,X) \to \L^p(\R^n,X)}\norm{f}_{\L^p(\R^n,X)}.
\end{align*}
Consequently, we have a well-defined contractive map $Q_{p,X} \co \mathfrak{M}^{p}(\R^n,X) \to \B(\L^p(\R^n,X))$. 

In the sequel, we denote by $\L^1(\R^n,X)_{\w}$ the space $\L^1(\R^n,X)$ equipped with the locally convex topology $\sigma(\L^1(\R^n,X),\L^\infty(\R^n,X'))$ obtained with the inclusion $\L^\infty(\R^n,X') \subset (\L^1(\R^n,X))'$ given by \cite[Proposition 1.3.1]{HvNVW}. Note that $\L^\infty(\R^n,X')$ is norming for $\L^1(\R^n,X)$ again by \cite[Proposition 1.3.1]{HvNVW}. So we have a well-defined dual pair and furthermore $\L^1(\R^n,X)$ is Hausdorff for this topology. Similarly, we will use $\L^\infty(\R^n,X)_{\w^*}$ for the space $\L^\infty(\R^n,X)$ equipped with the topology $\sigma(\L^\infty(\R^n,X),\L^1(\R^n,X'))$, which is locally convex, obtained with the inclusion $\L^1(\R^n,X') \subset (\L^\infty(\R^n,X))'$.

Note that for any $g \in \L^1(\R^n)$, the complex function 
$$
R \mapsto \big\langle g,\phi(R^{-1} \cdot)\big\rangle_{\L^1(\R^n),\L^\infty(\R^n)}
=\big\langle S_{R}(g),\phi\big\rangle_{\L^1(\R^n),\L^\infty(\R^n)}
$$ 
is continuous on $\SO(\R^n)$ and consequently measurable. Since $\SO(\R^n)$ is compact, we deduce that this function is integrable. Consequently, the function $\SO(\R^n) \to \L^\infty(\R^n)_{\w^*}$, $R \mapsto \phi(R^{-1}\, \cdot)$ is scalarly $\mu$-integrable. Moreover, it is bounded since in $\L^\infty(\R^n)$ weak* bounded subsets coincide with bounded subsets by \cite[Theorem 2.6.7]{Meg}. Since $\L^\infty(\R^n)_{\w^*}$ is semireflexive, we deduce that this bounded function is Gelfand integrable by \cite[Corollary, VI.6]{Bou1} and that $\int_{\SO(\R^n)} \phi(R^{-1}\cdot)\d\mu(R)$ belongs to $\L^\infty(\R^n)$. %\cite[Corollary 11.53]{AlB}. 

Let $f \co \R^n \to X$ be a function of the vector-valued Schwartz space $\mathcal{S}(\R^n,X)$ such that the continuous function $\hat{f} \co \R^n \to X$ has compact support. Note that the subset of such functions is dense in the Bochner space $\L^p(\R^n,X)$ by \cite[Proposition 2.4.23]{HvNVW} and that $\hat{f}$ belongs to $\L^1(\R^n,X)$. By \cite[IV.94, Corollary 1]{Bou1}, the product $T \co \L^\infty(\R^n) \to \L^1(\R^n,X)$, $h \mapsto h\hat{f}$ is continuous. Moreover, $T$ remains continuous when considered as a map $T \co \L^\infty(\R^n)_{\w^*} \to \L^1(\R^n,X)_{\w}$. Indeed, suppose that the net $(h_i)$ of $\L^\infty(\R^n)$ converges to $h$ in the weak* topology. Then for any $g \in \L^\infty(\R^n,X')$, the function $x \mapsto \big\langle \hat{f}(x),g(x) \big\rangle_{X,X'}$ belongs to $\L^1(\R^n)$ by \cite[IV.94, Corollary 1]{Bou1}, and consequently
\begin{align*}
& \big\langle T(h_i), g \big\rangle_{\L^1(\R^n,X),\L^\infty(\R^n,X')}
=\big\langle h_i\hat{f}, g \big\rangle_{\L^1(\R^n,X),\L^\infty(\R^n,X')} \\
& =\int_{\R^n} h_i(x) \big\langle \hat{f}(x),g(x) \big\rangle_{X,X'} \d x
\xra[\ i\ ]{} \int_{\R^n} h(x) \big\langle \hat{f}(x),g(x) \big\rangle_{X,X'} \d x \\
&=\big\langle h\hat{f}, g \big\rangle_{\L^1(\R^n,X),\L^\infty(\R^n,X')}
=\big\langle T(h), g \big\rangle_{\L^1(\R^n,X),\L^\infty(\R^n,X')}.
\end{align*}  
Then Lemma \ref{Lemma-1} implies that the function $\SO(\R^n) \to \L^1(\R^n,X)_{\w}$, $R \mapsto \phi(R^{-1}\, \cdot)\hat{f}$ is Gelfand integrable with $\int_{\SO(\R^n)} \phi(R^{-1}\, \cdot)\hat{f} \d\mu(R)$ belonging to $\L^1(\R^n,X)_{\w}$ and that
\begin{equation}
\label{Second-equation}
\bigg(\int_{\SO(\R^n)} \phi(R^{-1}\, \cdot) \d\mu(R)\bigg)\hat{f}
\overset{\eqref{linear-maps-and-integrals}}=\int_{\SO(\R^n)} \phi(R^{-1}\, \cdot)\hat{f} \d\mu(R),
\end{equation}
where the integrals are Gelfand integrals and both sides belong to $\L^1(\R^n,X)$.
Now using an obvious vector-valued extension of \cite[Proposition 1.3 (vii)]{Hao} in the third and in the fifth equality, we obtain a.e.
\begin{align}
\label{first-equation}
\big((S_R^{-1} M_\phi  S_R )\ot \Id_X\big)f
&=\big((S_R^{-1} M_\phi)\ot \Id_X\big) (f(R\,\cdot)) \\
& =(S_R^{-1} \ot \Id_X) \mathcal{F}^{-1}\big[\phi(\cdot) \widehat{f(R\,\cdot)} \big]\nonumber\\
&=(S_R^{-1} \ot \Id_X) \mathcal{F}^{-1}\big[ \phi(\cdot) \hat{f} (R\,\cdot) \big] \nonumber \\
&=(S_R^{-1} \ot \Id_X)\big(\big(\mathcal{F}^{-1}\big[ \phi(R^{-1}\, \cdot) \hat{f} \big]\big)(R\, \cdot)\big) \nonumber\\
&=\mathcal{F}^{-1} \big(\phi(R^{-1}\, \cdot) \hat{f} \big). \nonumber
\end{align}
According to \cite[page 105]{HvNVW}, the inverse Fourier transform $\mathcal{F}^{-1} \co \L^1(\R^n,X) \to \L^\infty(\R^n,X)$ is bounded. We will show that it remains continuous when considered as a map $\mathcal{F}^{-1} \co \L^1(\R^n,X)_{\w} \to \L^\infty(\R^n,X)_{\w^*}$. Indeed, suppose that the net $(h_i)$ of $\L^1(\R^n,X)_{\w}$ converges to $h$. For any $g \in \L^1(\R^n,X')$, using two times an obvious vector-valued extension of \cite[Theorem 1.12]{Hao}, we obtain
\begin{align*}
%\MoveEqLeft
\big\langle \mathcal{F}^{-1}(h_i), g \big\rangle_{\L^\infty(\R^n,X),\L^1(\R^n,X')}
&=\big\langle h_i, \mathcal{F}^{-1}(g) \big\rangle_{\L^1(\R^n,X),\L^\infty(\R^n,X')} \\
&\xra[\ i\ ]{} \big\langle h, \mathcal{F}^{-1}(g) \big\rangle_{\L^1(\R^n,X),\L^\infty(\R^n,X')} \\
&=\big\langle \mathcal{F}^{-1}(h), g \big\rangle_{\L^\infty(\R^n,X),\L^1(\R^n,X')}.
\end{align*}  
Using again Lemma \ref{Lemma-1} with $\mathcal{F}^{-1}$ instead of $T$, we obtain that the function 
$$
\SO(\R^n) \to \L^\infty(\R^n,X)_{\w^*},\ R \mapsto \mathcal{F}^{-1} \big(\phi(R^{-1}\, \cdot)\hat{f}\big)
$$ 
is Gelfand integrable with $\int_{\SO(\R^n)} \mathcal{F}^{-1} \big(\phi(R^{-1}\, \cdot)\hat{f}\big) \d\mu(R)$ belonging to the space $\L^\infty(\R^n,X)$ and that a.e.
\begin{align}
\MoveEqLeft
\label{formula}
\mathcal{F}^{-1} \bigg(\bigg(\int_{\SO(\R^n)} \phi(R^{-1}\, \cdot) \d\mu(R)\bigg) \hat{f}\bigg) \\
&\overset{\eqref{Second-equation}}=\mathcal{F}^{-1} \bigg(\int_{\SO(\R^n)} \phi(R^{-1}\, \cdot) \hat{f} \d\mu(R) \bigg)\nonumber\\
&\overset{\eqref{linear-maps-and-integrals}}=\int_{\SO(\R^n)} \mathcal{F}^{-1} \big(\phi(R^{-1}\, \cdot)\hat{f}\big) \d\mu(R) \nonumber\\
&\overset{\textrm{def}}=\int_{\SO(\R^n)}^{\mathrm{Gel}} \xi_R \d\mu(R), \nonumber
\end{align}
where the last Gelfand integral defines an element of $\L^\infty(\R,X)$. Now, we show that this Gelfand integral and the Bochner integral \eqref{Equa33} in $\L^p(\R^n,X)$ are equal almost everywhere. If $g$ belongs to $\mathscr{D}(\R^n,X')$, using \cite[(1.2) page 15]{HvNVW} in the sixth equality with the bounded map $T \co \L^p(\R^n,X) \to \C$, $h \mapsto \langle h, g \rangle_{\L^p(\R^n,X),\L^{p'}(\R^n,X')} $, we obtain
\begin{align*}
& \int_{\R^n} \bigg\langle \bigg(\int_{\SO(\R^n)}^{\mathrm{Gel}}  \xi_R \d\mu(R)\bigg)(s), g(s) \bigg\rangle_{X,X'} \d s \\
& =\bigg\langle \int_{\SO(\R^n)}^{\mathrm{Gel}}  \xi_R \d\mu(R),g \bigg\rangle_{\L^\infty(\R^n,X),\L^1(\R^n,X')} \\
& \overset{\eqref{Gelfand-bracket2}}=\int_{\SO(\R^n)} \big\langle \xi_R,g \big\rangle_{\L^\infty(\R^n,X),\L^1(\R^n,X')} \d\mu(R)  \\
& =\int_{\SO(\R^n)} \bigg(\int_{\R^n} \big\langle \xi_R(s),g(s) \big\rangle_{X,X'} \d s\bigg)\d\mu(R) \\
& \overset{\eqref{first-equation}}=\int_{\SO(\R^n)} \bigg(\int_{\R^n} \Big\langle\big(\big((S_R^{-1} M_\phi  S_R )\ot \Id_X\big)f\big)(s),g(s) \Big\rangle_{X,X'} \d s\bigg)\d\mu(R)\\
& =\int_{\SO(\R^n)} \big\langle \big((S_R^{-1} M_\phi  S_R )\ot \Id_X\big)f, g \big\rangle_{\L^p(\R^n,X),\L^{p'}(\R^n,X')} \d\mu(R)\\
& =\bigg\langle \int_{\SO(\R^n)}^{\mathrm{Boc}} \big((S_R^{-1} M_\phi  S_R )\ot \Id_X\big)f \d\mu(R), g \bigg\rangle_{\L^p(\R^n,X),\L^{p'}(\R^n,X')} \\
& =\int_{\R^n} \bigg\langle \bigg(\int_{\SO(\R^n)}^{\mathrm{Boc}}\big((S_R^{-1} M_\phi  S_R )\ot \Id_X\big)f\d\mu(R)\bigg)(s), g(s) \bigg\rangle_{X,X'} \d s\\
& \overset{\eqref{Equa33}}=\int_{\R^n} \big\langle (Q_{p,X}(\phi)f)(s), g(s) \big\rangle_{X,X'} \d s.
\end{align*}
Using Lemma \ref{Lemma-2} in the first equality, we conclude that a.e.
$$
Q_{p,X}(\phi)f
=\int_{\SO(\R^n)}^{\mathrm{Gel}} \xi_R \d\mu(R)
\overset{\eqref{formula}}=\mathcal{F}^{-1} \bigg(\bigg(\int_{\SO(\R^n)} \phi(R^{-1}\, \cdot) \d\mu(R)\bigg) \hat{f}\bigg).
$$
Using the above mentioned density, we conclude that the bounded operator
$$
Q_{p,X}(\phi) \co \L^p(\R^n,X) \to \L^p(\R^n,X)
$$
is induced by the  Fourier multiplier 
\begin{equation}
\label{symbol-psi}
P_{p,X}(\phi)
\overset{\textrm{def}}=\int_{\SO(\R^n)} \phi(R^{-1}\,\cdot) \d\mu(R)	
\end{equation} 
(Gelfand integral). Moreover, for any $R_0 \in \SO(\R^n)$, using the obvious continuity of the linear map $S_{R_0} \co \L^\infty(\R^n)_{\w^*} \to \L^\infty(\R^n)_{\w^*}$ together with Lemma \ref{Lemma-1}, we see that
\begin{align*}
S_{R_0}(P_{p,X}(\phi)) 
&\overset{\eqref{symbol-psi}}=S_{R_0}\bigg(\int_{\SO(\R^n)} \phi(R^{-1}\,\cdot) \d\mu(R) \bigg)
\overset{\eqref{linear-maps-and-integrals} }=\int_{\SO(\R^n)} S_{R_0} S_R^{-1}(\phi) \d\mu(R) \\
& =\int_{\SO(\R^n)} S_R^{-1}(\phi) \d\mu(R) =\int_{\SO(\R^n)} \phi(R^{-1}\,\cdot) \d\mu(R)
\\
&\overset{\eqref{symbol-psi}}=P_{p,X}(\phi).
\end{align*}
We deduce that the multiplier $P_{p,X}(\phi)$ is radial, i.e. $P_{p,X}(\phi)$ belongs to $\mathfrak{M}^{p}_{\rad}(\R^n,X)$. In addition, if $\phi$ itself is already radial, then 
$$
P_{p,X}(\phi)
\overset{\eqref{symbol-psi}}=\int_{\SO(\R^n)} \phi(R^{-1}\,\cdot) \d\mu(R)
=\int_{\SO(\R^n)} \phi \d\mu(R) 
=\phi.
$$ 

Finally, if $X=\C$ and if the map $M_\phi \co \L^p(\R^n) \to \L^p(\R^n)$ is positive, then $S_{R}^{-1} M_\phi S_R \co \L^p(\R^n) \to \L^p(\R^n)$ is positive, so it is easy to prove that the map $M_{P_{p,\C}(\phi)} \co \L^p(\R^n) \to \L^p(\R^n)$ is also positive by using \cite[Proposition 1.2.25]{HvNVW}.
\end{proof}

%%%%%%%%%%%%%%%%%%%%%%%%%%%%%%%%%%%%%%%%%
%%%%%%%%%%%%%%%%%%%%%%%%%%%%%%%%%%%%%%%%%

\end{document}